\documentclass[a4paper,twoside,11pt,reqno]{amsart}

\usepackage[T1]{fontenc}
\usepackage[utf8]{inputenc}
\usepackage{lmodern}
\usepackage{amsmath,amssymb,mathtools}
\usepackage{microtype}
\usepackage[colorlinks=true,linkcolor=blue,citecolor=blue,urlcolor=blue]{hyperref}

\numberwithin{equation}{section}

\theoremstyle{plain}
\newtheorem{theorem}{Theorem}[section]
\newtheorem{proposition}[theorem]{Proposition}
\newtheorem{lemma}[theorem]{Lemma}

\theoremstyle{remark}
\newtheorem{remark}[theorem]{Remark}
\newtheorem{example}[theorem]{Example}

\newcommand{\F}{\mathbb F}
\newcommand{\N}{\mathbb N}
\newcommand{\Nzero}{\mathbb N_0}
\newcommand{\C}{\mathbb C}
\newcommand{\M}{\widehat{\mathbb F_n}}
\newcommand{\1}{\mathbf 1}
\newcommand{\Ent}{\operatorname{Ent}}
\newcommand{\Tr}{\operatorname{Tr}}

\newcommand{\Aadm}{\mathcal A}
\newcommand{\Aadmn}{\mathcal A_0}
\newcommand{\PiH}{\Pi}

\title[Free group hypercontractivity]{Sharp hypercontractivity for free group von Neumann algebras}

\author[X. Xie]{Xinyuan Xie}
\address{(X.X.) Department of Mathematics, University of California, Irvine, CA 92697, USA}
\email{xinyuax7@uci.edu}

\author[H. Zhang]{Haonan Zhang}
\address{(H.Z.) Department of Mathematics, University of South Carolina, Columbia, SC 29208, USA}
\email{haonanzhangmath@gmail.com}

\hypersetup{
  pdftitle={Sharp hypercontractivity for free group von Neumann algebras},
  pdfauthor={Xinyuan Xie and Haonan Zhang}
}

\date{\today}

\subjclass[2020]{46L53, 46L52, 60B15, 43A15}
\keywords{Free groups, hypercontractivity, log-Sobolev inequalities, noncommutative $L_p$-spaces, Poisson semigroup}

\begin{document}

\begin{abstract}
In this paper, we settle the problem of optimal hypercontractivity for free group von Neumann algebras. Namely, for $n\ge 2$ and the free group $\mathbb{F}_n$ on $n$ generators, we prove that for any $1<p\le q<\infty$, the Poisson semigroup $P_t$ associated with the word-length function satisfies 
\[
\|P_t:L_p(\widehat{\mathbb{F}_n})\to L_q(\widehat{\mathbb{F}_n})\|\le 1 \qquad\textnormal{ if and only if }\qquad
t\ge \frac{1}{2}\log \frac{q-1}{p-1}.
\]
The main idea is to apply a refined cubic majorant estimate from a recent work of Frank and Ivanisvili \cite{FrankIvanisvili2026} to the equivalent logarithmic Sobolev inequality, and use the Haagerup-type cancellation estimate \cite{Haagerup1979}. Similar ideas and techniques extend to free products
\[
        G=\left(*_{\alpha\in A}\mathbb Z\right)*\left(*_{\beta\in B}\mathbb Z_2\right)
\]
and the free Gaussian von Neumann algebras. In the former setting, partial sharp estimates were previously obtained by Junge--Palazuelos--Parcet--Perrin--Ricard \cite{JungePalazuelosParcetPerrinRicard2015}; in the latter, our approach recovers Biane's free hypercontractivity theorem \cite{Biane1997}.
\end{abstract}

\maketitle

\section{Introduction}

Let $\F_n$ be the free group on $n$ generators, where $n\ge 2$, and let $|g|$ denote the reduced word length of $g\in\F_n$. We write $\M$
for the group von Neumann algebra generated by the left regular representation $\lambda:\F_n\to B(\ell_2(\F_n))$. It is equipped with the canonical tracial state $\tau$ given by
\[
        \tau(\lambda_g)=\delta_{g,e}.
\]
The associated noncommutative $L_p$-spaces are denoted by $L_p(\M)$, with
\[
        \|x\|_p=\tau(|x|^p)^{1/p},\qquad 1\le p<\infty.
\]
We refer to \cite{PisierXu2003} for more background about noncommutative $L_p$-spaces.
The Poisson semigroup $(P_t)_{t\ge0}$ is defined by
\[
        P_t(\lambda_g)=e^{-t|g|}\lambda_g,\qquad g\in\F_n.
\]
We write $P_t=e^{-tL}$, where
\[
        L(\lambda_g)=|g|\lambda_g.
\]
Since the word length is conditionally negative definite \cite{Haagerup1979}, $(P_t)$ is a symmetric completely positive, trace-preserving, unital semigroup. So it is contractive over $L_p(\M)$ for all $p\ge 1$. It is natural to ask if it is hypercontractive. 

The main result is the following sharp hypercontractive estimate.

\begin{theorem}\label{thm:hc}
Let $1<p\le q<\infty$. Then
\[
        \|P_t x\|_q\le \|x\|_p,\qquad x\in L_p(\M),
\]
holds if and only if
\[
        t\ge \frac12\log\frac{q-1}{p-1}.
\]
\end{theorem}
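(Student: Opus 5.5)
Necessity and sufficiency are handled separately. For necessity, take $x=\1+\varepsilon y$ with $y$ self-adjoint, $\tau(y)=0$, and $y$ an eigenvector of $L$ with eigenvalue $1$; for instance $y=\lambda_{g}+\lambda_{g^{-1}}$ with $|g|=1$. A second-order expansion in $\varepsilon$ of $\|P_t x\|_q\le\|x\|_p$ gives $(q-1)e^{-2t}\|y\|_2^2\le (p-1)\|y\|_2^2$, which is exactly $t\ge\frac12\log\frac{q-1}{p-1}$. This is the usual linearization argument and is routine.

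For sufficiency we use Gross's equivalence. We will show that $P_t$ satisfies the logarithmic Sobolev inequality with constant $2$:
\[
\Ent(|x|^2):=\tau(|x|^2\log|x|^2)-\tau(|x|^2)\log\tau(|x|^2)\le 2\,\tau(x^*Lx)
\]
for all $x$. Gross's argument holds in the tracial noncommutative setting (Olkiewicz--Zegarlinski). By the standard reduction we may take $x\ge0$ with finitely supported Fourier series. We also need the $L_p$-regularity of the Dirichlet form, $\tau(x^{p-1}Lx)\ge \frac{4(p-1)}{p^2}\mathcal E(x^{p/2})$. This follows from symmetry and complete positivity, i.e.\ the Stroock--Varopoulos inequality for symmetric Markov semigroups. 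With LSI and this inequality, differentiating $\log\|P_tx\|_{q(t)}$ with $q(t)-1=(p-1)e^{2t}$ gives hypercontractivity for $1<p\le q<\infty$.

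The core step is the LSI, which we prove by spectral/length decomposition. Write $x=\sum_{k\ge0}x_k$, where $x_k$ is the projection onto words of length $k$. Then $\tau(xLx)=\sum_k k\|x_k\|_2^2$. Following the strategy announced in the abstract, we first bound $\Ent(x^2)$ by a quantity involving only $\tau$ of polynomials in $x$. We do this with the refined cubic majorant of Frank--Ivanisvili: a pointwise inequality of the form $s^2\log s^2-s^2\le a+bs+cs^2+ds^3$, optimized over parameters. By functional calculus and normalization $\tau(x)=1$, this turns $\Ent(x^2)$ into an expression in $\|x-\tau(x)\|_2^2$ and $\tau\bigl((x-\tau x)^3\bigr)$. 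The cubic term is then estimated by the Haagerup-type cancellation bound, $\bigl|\tau(x_ix_jx_k)\bigr|\lesssim\|x_i\|_2\|x_j\|_2\|x_k\|_2$. This bound is nonzero only when $i,j,k$ satisfy triangle-type constraints with parity, and it follows from Haagerup's inequality and the word-length combinatorics. Combining these reduces the LSI to a finite-dimensional quadratic/cubic inequality in the sequence $(\|x_k\|_2)_k$ weighted by $k$. We then verify this inequality with the optimal constant. If needed, we reduce first to small $\|x-\tau x\|_2$ and treat large deviations via the trivial bound $\Ent\le \tau(x^2\log x^2)$.

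The main obstacle is that last step: obtaining constant exactly $2$ rather than a lossy constant. The first-level term $\|x_1\|_2^2$ is saturated, so the cubic contributions must be absorbed using the extra weight $k$ for $k\ge2$ together with the precise Haagerup cancellation constants. Our first attempt would be to choose the Frank--Ivanisvili majorant so that the cubic coefficient is as small as possible near $s=1$. We would then check, via an AM-GM splitting of each $\|x_i\|\|x_j\|\|x_k\|$, that $\sum_k(k-1)\|x_k\|_2^2$ dominates the cubic term.
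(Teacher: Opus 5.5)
Your necessity argument and the reduction of sufficiency to $\Ent(x^2)\le 2\tau(xLx)$ for positive $x\in\C[\F_n]$ are fine; the paper also takes this equivalence as standard. The gap is in the core step. You never state the ``finite-dimensional quadratic/cubic inequality'', and the route you sketch for it cannot work.

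Normalize $\tau(x^2)=1$, write $x=a\1+u$ with $a=\tau(x)$, and set $S^2=\tau(u^2)$ and $Q^2=\tau(u(L-I)u)=\sum_k(k-1)\|u_k\|_2^2$, so that $\tau(xLx)=S^2+Q^2\le 1+Q^2$. Because of branching, the cubic term on $\F_n$ is genuinely of order $Q^3$. Take the positive elements $x\propto\sum_g z^{|g|}\lambda_g$. They are positive since $g\mapsto z^{|g|}$ is positive definite, and bounded and in the form domain when $z\sqrt{2n-1}<1$. A reduced-word count gives $Q\to\infty$ and $\tau(u^3)/Q^3\to\beta_n=(2n-2)/\sqrt{2n(2n-1)}>0$ as $z\sqrt{2n-1}\uparrow 1$.

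Two consequences follow. First, no splitting of the products $\|x_i\|_2\|x_j\|_2\|x_k\|_2$ can make the quadratic quantity $\sum_k(k-1)\|x_k\|_2^2$ dominate the cubic term. Second, suppose you apply a cubic majorant $2s^2\log s\le A+Bs+Cs^2+Ds^3$ (necessarily $D>0$) to $x/\mu$ at a scale $\mu$ that does not grow with $Q$. This includes your choice, tangent at $s=1$ after normalizing $\tau(x)=1$. The resulting bound then contains $(D/\mu)\tau(x^3)\gtrsim Q^3$ and exceeds $2\tau(xLx)$ for large $Q$, \emph{no matter how precisely $\tau(u^3)$ is estimated}.

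Your fallback does not address this, because $\Ent(x^2)\le\tau(x^2\log x^2)$ does not involve $L$ at all; under $\tau(x^2)=1$ it is an identity. Separately, a cancellation bound with $\lesssim$ would already destroy the constant $2$. You need $\|\PiH_k(a_ib_j)\|_2\le\|a_i\|_2\|b_j\|_2$ with constant exactly $1$. This comes from the proof of Haagerup's inequality, not from the inequality itself.

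The paper's two ingredients are what is missing. The first is the sharp bound $|\tau(u^3)|\le 3S^2Q+Q^3$. Put $b_k=\|u_k\|_2$ and $b_0=0$, and parametrize admissible triples as $(i,j,k)=(s+t,r+t,r+s)$ with $r,s,t\ge0$. The cancellation bound then gives $|\tau(u^3)|\le 3\langle Kv,v\rangle+\Tr(K^3)$ for the Hankel matrix $K=(b_{r+s})_{r,s\ge1}$ and $v=(b_r)_{r\ge1}$. Since $\|v\|_2=S$ and $\|K\|_{\mathrm{HS}}^2=\sum_m(m-1)b_m^2=Q^2$, one has $\langle Kv,v\rangle\le S^2Q$ and $\Tr(K^3)\le\|K\|_{\mathrm{HS}}^3=Q^3$.

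The second ingredient is to apply the Frank--Ivanisvili majorant at the $x$-dependent scale $\lambda=a+Q\ge Q$:
\[
2t^2\log t\le\frac{2}{3\lambda}t^3+(2\log\lambda+1)t^2-2\lambda t+\frac{\lambda^2}{3}.
\]
This divides the $Q^3$ term by $\lambda$. The third-moment bound becomes $\tau(x^3)\le 3\lambda-3a\lambda^2+\lambda^3$, hence $\Ent(x^2)\le\lambda^2-4a\lambda+3+2\log\lambda$. Finally, $2\tau(xLx)$ minus this bound is exactly $\lambda^2-1-2\log\lambda\ge0$.
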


Some partial results were known before, and optimal estimates were obtained in several restricted forms. Junge, Palazuelos, Parcet and Perrin proved optimal $L_2$-$L_q$ estimates for even integer $q$ large enough \cite{JungePalazuelosParcetPerrin2017}. Junge, Palazuelos, Parcet, Perrin and Ricard obtained optimal estimates when restricted to symmetric subalgebras \cite{JungePalazuelosParcetPerrinRicard2015}. Ricard and Xu proved the optimal $L_2$-$L_q$ estimate for free groups for all $q\ge 4-\varepsilon_0$, for some $\varepsilon_0>0$, using a noncommutative martingale convexity inequality \cite{RicardXu2016}. The current work proves the optimal estimates in full range. 

The hypercontractive estimate is known to be equivalent to the log-Sobolev inequality; we use this standard equivalence and omit its proof. For a positive operator $a$ affiliated with $\M$, put
\[
        \Ent(a)=\tau(a\log a)-\tau(a)\log\tau(a).
\]
It remains to prove the equivalent log-Sobolev inequality with sharp constants. 

\begin{theorem}\label{thm:lsi}
For every positive $x\in\C[\F_n]$,
\begin{equation}\label{eq:lsi}
        \Ent(x^2)\le 2\tau(xLx).
\end{equation}
The inequality extends by closure to the natural form domain of $L^{1/2}$.
\end{theorem}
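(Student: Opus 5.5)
We will prove \eqref{eq:lsi} by reducing it to a scalar inequality for each ``radial layer'' of $x$, in the spirit of the abstract: a cubic majorant bound for the entropy, combined with Haagerup's cancellation estimate for products of words of fixed length. Write $x=\sum_{k\ge0}x_k$ with $x_k=\sum_{|g|=k}\hat x(g)\lambda_g$, the projection of $x$ onto words of length $k$. By homogeneity we may normalize $\tau(x^2)=\sum_k\|x_k\|_2^2=1$. The right-hand side of \eqref{eq:lsi} is then
\[
2\tau(xLx)=2\sum_k k\|x_k\|_2^2 .
\]
So it suffices to bound $\tau(x^2\log x^2)$ by a quantity that depends only on the numbers $a_k=\|x_k\|_2$, plus suitable cross terms.

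\emph{Step 1: cubic majorant.} Following Frank--Ivanisvili \cite{FrankIvanisvili2026}, we use an operator-valued majorant of the form
\[
s\log s\le (s-1)+c_1(s-1)^2+c_2(s-1)^3+\dots
\]
applied to $s=x^2$. Here $x^2$ is affiliated with $\M$ and commutes with itself, so the scalar inequality transfers by functional calculus. More precisely, we need the refined version in which the entropy $\Ent(x^2)$ is controlled by
\[
\sum_{j\ge1}\tau\bigl((x-\tau(x))^{2}\cdots\bigr),
\]
that is, by second and third moments of the fluctuation $y=x-x_0$. We expect this to give
\[
\Ent(x^2)\le 2\tau(y^2)+\text{(cubic term)}\ \tau(y^3)\text{-type expressions},
\]
with the exact coefficients as in the paper of Frank and Ivanisvili.

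\emph{Step 2: Haagerup cancellation.} Expand $\tau(y^2)=\sum_{k\ge1}a_k^2$ and
\[
\tau(y^3)=\sum_{k,l,m\ge1}\tau(x_kx_lx_m).
\]
The term $\tau(x_kx_lx_m)$ vanishes unless $(k,l,m)$ satisfies the triangle conditions with the parity constraint $k+l+m$ even. Haagerup's inequality in its sharp convolution form gives
\[
|\tau(x_kx_lx_m)|\le a_ka_la_m .
\]
Combining this with Step 1 reduces \eqref{eq:lsi} to a purely scalar inequality for nonnegative sequences $(a_k)$ with $\sum a_k^2=1$:
\[
\text{(entropy majorant in terms of }a_k\text{ and triangle sums)}\le 2\sum_k k a_k^2 .
\]
The linear weight $k$ on the right should dominate the contributions of the cross terms.

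\emph{Step 3: the scalar inequality, and the main obstacle.} The main obstacle is that a crude cubic majorant loses constants, and the coefficient $2$ on the right is sharp: equality is approached as $x\to 1$, at first order in $x_1$. Two regimes therefore have to be treated separately. Near $x=1$ the quadratic term $2\tau(y^2)=2\sum a_k^2\le 2\sum k a_k^2$ is already tight at $k=1$, so the cubic corrections must be absorbed by the slack $2\sum (k-1)a_k^2$ together with the positivity of $x$. For this we will exploit that the triangle condition forces at least one index in each triple to be $\ge2$, or all three indices to equal $1$. The all-ones triple $(1,1,1)$ violates the parity condition, so it does not occur. For $x$ far from constant, we plan to use a decomposition argument, or the monotonicity/tensorization in the $n$ variables, to reduce to that regime.

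We will first try to prove the scalar inequality by an AM--GM bound of the form
\[
a_ka_la_m\le \epsilon\,\max(k,l,m)\,a_{\max}^2+\dots,
\]
and only then tune the constants against the Frank--Ivanisvili majorant. Finally, the extension to the form domain of $L^{1/2}$ follows by density of $\C[\F_n]$ together with lower semicontinuity of $\Ent$.
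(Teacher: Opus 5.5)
\textbf{There is a genuine gap in Step~3.} Your Step~2 is correct; it is the content of Lemma~\ref{lem:projection}. The proof, however, has to happen in Step~3, and the scalar inequality you reduce to there is false, so no AM--GM tuning can close it.

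First, the majorant must be applied to $t=x$, not to $s=x^2$. In the variable $s$ you would produce $\tau((x^2-1)^3)$, a sixth-order quantity that triple-product estimates do not control. The relevant Frank--Ivanisvili bound is $2t^2\log t\le \frac23t^3+t^2-2t+\frac13$. Normalize $\tau(x^2)=1$ and set $a=\tau(x)$, $S^2=\sum_{k\ge1}a_k^2=1-a^2$, $Q^2=\sum_{k\ge1}(k-1)a_k^2$. Then this bound gives $\Ent(x^2)\le\frac23\bigl(a^3+3aS^2+\tau(y^3)\bigr)+\frac43-2a$. After Step~2 you must show that this quantity, with $\tau(y^3)$ replaced by the triangle sum, is at most $2\sum_k ka_k^2=2(S^2+Q^2)$.

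Now take $a_k=cz^k$ for $k\ge1$ (up to truncation). Then the triangle sum equals exactly $3S^2Q+Q^3$, and a direct computation gives that $2(S^2+Q^2)$ minus your majorant equals
\[
(1-\mu)^2\Bigl(\tfrac23(1-\mu)+2a\Bigr),\qquad \mu=a+Q.
\]
This is negative whenever $Q>1+2a$. Since $Q=S\bigl(z^2/(1-z^2)\bigr)^{1/2}\to\infty$ as $z\to1$ with $S$ fixed, this regime actually occurs.

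So the claim that the weight $k$ dominates the cross terms is wrong. The branching part of the triangle sum is genuinely of size $Q^3$ (compare Remark~\ref{rem:third-moment-sharpness}), and any fixed cubic majorant has a positive $t^3$ coefficient. Positivity of $x$ cannot rescue this, because it is no longer visible once you pass to the sequence $(a_k)$. Your fallback for the far-from-constant regime (a decomposition, or tensorization over the generators) has no content here either. $\widehat{\F_n}$ is a free product, not a tensor product, and the tensorization argument for log-Sobolev inequalities does not apply.

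\textbf{The missing idea is an $x$-dependent rescaling of the majorant.} Substituting $s=t/\lambda$ gives, for every $\lambda>0$,
\[
2t^2\log t\le\frac{2}{3\lambda}t^3+(2\log\lambda+1)t^2-2\lambda t+\frac{\lambda^2}{3}
\]
(Lemma~\ref{lem:relaxed-majorant}). The paper takes $\lambda=\tau(x)+Q$.

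This is combined with the explicit bound $\tau(u^3)\le3S^2Q+Q^3$ of Proposition~\ref{thm:third-moment}. That bound is proved by parametrizing the triangle triples as $(s+t,r+t,r+s)$ and splitting the sum into $3T_1+T_2$. Then $T_1\le S^2Q$ by Cauchy--Schwarz, and $T_2=\Tr(K^3)\le\|K\|_{\mathrm{HS}}^3=Q^3$ for the Hankel matrix $K=(b_{r+s})_{r,s\ge1}$.

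Together these give $\tau(x^3)\le3\lambda-3a\lambda^2+\lambda^3$, hence $\Ent(x^2)\le\lambda^2-4a\lambda+3+2\log\lambda$. The difference $2\tau(xLx)$ minus this bound is exactly $\lambda^2-1-2\log\lambda\ge0$, uniformly, with no separate regimes. The factor $1/\lambda$ on the cubic term is what absorbs the $Q^3$. Your plan has nothing playing this role, and it also lacks a concrete bound of the form $3S^2Q+Q^3$ for the triangle sum, which your $\max(k,l,m)$-weighted AM--GM cannot produce.
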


The proof has two ingredients. The first one is a relaxed form of the cubic majorant in the recent work of Frank and Ivanisvili \cite{FrankIvanisvili2026} on the sharp log-Sobolev inequality on finite cycles: For every $t\ge0$ and $\lambda>0$,
\begin{equation}\label{eq:relaxed-cubic-intro}
        2t^2\log t
        \le
        \frac{2}{3\lambda}t^3+(2\log\lambda+1)t^2-2\lambda t+\frac{\lambda^2}{3}.
\end{equation}
This helps to reduce the entropy estimates to cubic estimates that are easier to handle. The original cubic majorant corresponds to $\lambda=1$ (actually, they are equivalent as we shall see later).

The second ingredient is a third-moment estimate coming from our work \cite{XieZhang2026} following Frank and Ivanisvili. For $u=u^*\in\C[\F_n]$ with $\tau(u)=0$, we will need to control $\tau(u^3)$. However, similar estimates in \cite{XieZhang2026} on cyclic groups do not work for free groups. 
Define $S=S(u)$ and $Q=Q(u)$ by
\begin{equation}\label{eq:intro-SQ}
        S^2=\tau(u^2),
        \qquad
        Q^2=\tau(u(L-I)u).
\end{equation}
We prove the third-moment estimate (weaker than that in \cite{XieZhang2026})
\begin{equation}\label{eq:third-moment-intro}
        |\tau(u^3)|\le 3S^2Q+Q^3.
\end{equation}
The extra term $Q^3$ is the relaxation that accommodates the branching behavior of free groups. It is necessary, as we shall see below. To conclude the proof, we shall combine this weaker third-moment estimate and the above relaxed cubic majorant \eqref{eq:relaxed-cubic-intro}. Indeed, to prove \eqref{eq:lsi} for $x=\tau(x)+u$, we shall take $\lambda=\tau(x)+Q$, which depends on $x$.  

Sections \ref{sect2} and \ref{sect3} are devoted to the proofs of ingredients and the main theorem. In Section \ref{sect4}, we discuss more examples to which our framework applies.  

\subsection*{Acknowledgments}
H.Z. is supported by NSF DMS-2453408. He is grateful to Professor Quanhua Xu for bringing this problem to him in 2016 and for continued encouragement. The authors acknowledge the use of ChatGPT 5.5 Pro. All mathematical arguments and proofs in the final manuscript were checked and written by the authors.

\section{The third-moment estimate}\label{sect2}

For $k\in\Nzero=\{0,1,...\}$, let
\[
        H_k=\operatorname{span}\{\lambda_g:\ |g|=k\}\subset L_2(\M)
\]
and let $\PiH_k$ denote the $L_2$-orthogonal projection onto $H_k$. Thus every $x\in\C[\F_n]$ has the orthogonal decomposition
\[
        x=\sum_{k\ge0}x_k,
        \qquad
        x_k=\PiH_kx\in H_k.
\]
If $x=x^*$, then
\[
        \|x\|_2^2=\sum_{k\ge0}\|x_k\|_2^2,
        \qquad
        \tau(xLx)=\sum_{k\ge0}k\|x_k\|_2^2.
\]

We shall use the following admissible sets of shell lengths:
\begin{align*}
        \Aadmn
        =\{(i,j,k)\in\Nzero^3:\ |i-j|\le k\le i+j,\ i+j+k\in 2\Nzero\},\qquad 
        \Aadm
        =\Aadmn\cap\N^3.
\end{align*}

The following lemma is essentially from the proof of the Haagerup inequality \cite{Haagerup1979}. We repeat the argument here for completeness.

\begin{lemma}[Cancellation projection estimate]\label{lem:projection}
Let $a_i\in H_i$ and $b_j\in H_j$. If $(i,j,k)\notin\Aadmn$, then
\[
        \PiH_k(a_i b_j)=0.
\]
If $(i,j,k)\in\Aadmn$, then
\begin{equation}\label{eq:projection-estimate}
        \|\PiH_k(a_i b_j)\|_2\le \|a_i\|_2\|b_j\|_2.
\end{equation}
\end{lemma}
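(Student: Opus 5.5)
We expand $a_i=\sum_{|g|=i}\alpha(g)\lambda_g$ and $b_j=\sum_{|h|=j}\beta(h)\lambda_h$, so that $a_ib_j=\sum_{g,h}\alpha(g)\beta(h)\lambda_{gh}$, and analyze the reduced word $gh$ combinatorially.

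\textbf{Support.} If $|g|=i$, $|h|=j$ and exactly $\ell$ letters cancel at the junction, then $|gh|=i+j-2\ell$ with $0\le\ell\le\min(i,j)$. Hence $k=|gh|$ satisfies $|i-j|\le k\le i+j$ and $i+j+k\in2\Nzero$. Consequently $\PiH_k(a_ib_j)=0$ whenever $(i,j,k)\notin\Aadmn$.

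\textbf{Norm bound for $(i,j,k)\in\Aadmn$.} Now fix $\ell=(i+j-k)/2$. Since there are no letter $x$ with $xx^{-1}$ issues beyond the maximal cancellation, a product $gh$ with $|gh|=k$ has exactly $\ell$ cancellations. Each such pair factors uniquely as $g=g'w$ and $h=w^{-1}h'$ with $|w|=\ell$, $|g'|=i-\ell$, $|h'|=j-\ell$, where $g'h'$ is reduced. Therefore
\[
\PiH_k(a_ib_j)=\sum_{|g'|=i-\ell,\ |h'|=j-\ell,\ g'h'\text{ reduced}} \Bigl(\sum_{|w|=\ell} \alpha(g'w)\beta(w^{-1}h')\Bigr)\lambda_{g'h'},
\]
where $\alpha(g'w)$ is understood as $0$ unless $g'w$ is reduced of length $i$, and similarly for $\beta(w^{-1}h')$. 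Because the pair $(g',h')$ is recovered from $g'h'$ by splitting at position $i-\ell$, the map $(g',h')\mapsto g'h'$ is injective. Thus the $\ell_2$ norm of $\PiH_k(a_ib_j)$ is at most that of the coefficient array $c(g',h')=\sum_w \alpha(g'w)\beta(w^{-1}h')$, with the reducedness restriction only removing terms.

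Next we view $\alpha$ as a matrix $A(g',w)$ and $\beta$ as a matrix $B(w,h')$. Every $g$ of length $i$ determines at most one pair $(g',w)$, so $\|A\|_{HS}\le\|\alpha\|_2$, and likewise $\|B\|_{HS}\le\|\beta\|_2$. Then $c=AB$ as a matrix product, and
\[
\|AB\|_{HS}\le\|A\|_{HS}\|B\|_{HS}\le\|a_i\|_2\|b_j\|_2
\]
by Cauchy--Schwarz. Restricting to entries where $g'h'$ is reduced only decreases the norm, which gives \eqref{eq:projection-estimate}.

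\textbf{Main care point.} The step requiring the most attention is the claim that $|gh|=k$ forces exactly $\ell$ cancellations with the uniqueness of the factorization. This holds because cancellation in a free group is a single contiguous block at the junction. The boundary cases $\ell=0$ and $\ell=\min(i,j)$ are handled by the same argument.
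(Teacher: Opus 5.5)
Your proposal is correct and is essentially the paper's proof: it uses the same unique factorization $g=g'w$, $h=w^{-1}h'$ with $g'h'$ reduced and the same injectivity of $(g',h')\mapsto g'h'$. Your bound $\|AB\|_{\mathrm{HS}}\le\|A\|_{\mathrm{HS}}\|B\|_{\mathrm{HS}}$ is exactly the Cauchy--Schwarz step the paper writes out with $A_r$ and $B_t$ before dropping the constraint $|rt|=k$. The sentence about ``$xx^{-1}$ issues'' is garbled, but the claim it supports is correct and is the same fact the paper relies on: cancellation at the junction is a single contiguous block, so $|gh|=k$ forces exactly $\ell$ cancellations.
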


\begin{proof}
Write
\[
        a_i=\sum_{|w|=i}\alpha_w\lambda_w,
        \qquad
        b_j=\sum_{|v|=j}\beta_v\lambda_v.
\]
A product of reduced words of lengths $i$ and $j$ can reduce to length $k$ only by cancelling
\[
        d=\frac{i+j-k}{2}
\]
letters. This is possible only when $(i,j,k)\in\Aadmn$.

Assume $(i,j,k)\in\Aadmn$. A pair $w,v$ contributing to $\PiH_k(a_i b_j)$ has a unique decomposition
\[
        w=rs,
        \qquad
        v=s^{-1}t,
\]
where
\[
        |s|=d,
        \qquad
        |r|=i-d,
        \qquad
        |t|=j-d,
\]
and
\[
        |rs|=i,
        \qquad
        |s^{-1}t|=j,
        \qquad
        |rt|=k.
\]
The last condition is exactly that no further cancellation occurs between $r$ and $t$. Conversely, every such triple gives a contribution to the $k$-th shell. Since $|r|$ and $|t|$ are fixed and $|rt|=k$, the map $(r,t)\mapsto rt$ is injective on the outer summation set. Hence
\[
        \|\PiH_k(a_i b_j)\|_2^2
        =
        \sum_{\substack{|r|=i-d,\ |t|=j-d\\ |rt|=k}}
        \left|
        \sum_{\substack{|s|=d\\ |rs|=i,\ |s^{-1}t|=j}}
        \alpha_{rs}\beta_{s^{-1}t}
        \right|^2.
\]
By Cauchy-Schwarz, the inner square is bounded by $A_rB_t$, where
\[
        A_r=
        \sum_{\substack{|s|=d\\ |rs|=i}}
        |\alpha_{rs}|^2,
        \qquad
        B_t=
        \sum_{\substack{|s|=d\\ |s^{-1}t|=j}}
        |\beta_{s^{-1}t}|^2.
\]
Indeed, this only enlarges the two $s$-sums on the right-hand side. Therefore
\begin{align*}
        \|\PiH_k(a_i b_j)\|_2^2
        &\le
        \sum_{\substack{|r|=i-d,\ |t|=j-d\\ |rt|=k}} A_rB_t  \\
        &\le
        \sum_{|r|=i-d,\ |t|=j-d} A_rB_t \\
        &=
        \left(\sum_{|r|=i-d}A_r\right)
        \left(\sum_{|t|=j-d}B_t\right).
\end{align*}
Finally,
\[
        \sum_{|r|=i-d}A_r
        =
        \sum_{\substack{|r|=i-d,\ |s|=d\\ |rs|=i}}
        |\alpha_{rs}|^2
        =
        \sum_{|w|=i}|\alpha_w|^2
        =\|a_i\|_2^2,
\]
by the unique prefix-suffix decomposition of a reduced word, and similarly
\[
        \sum_{|t|=j-d}B_t
        =
        \sum_{\substack{|s|=d,\ |t|=j-d\\ |s^{-1}t|=j}}
        |\beta_{s^{-1}t}|^2
        =
        \sum_{|v|=j}|\beta_v|^2
        =\|b_j\|_2^2.
\]
This proves \eqref{eq:projection-estimate}.
\end{proof}

\begin{proposition}[Third-moment estimate]\label{thm:third-moment}
Let $u=u^*\in\C[\F_n]$ satisfy $\tau(u)=0$. Write
\[
        u=\sum_{k\ge1}u_k,
        \qquad
        u_k\in H_k,
\]
and put $b_k=\|u_k\|_2$. Define $S,Q\ge0$ by
\begin{equation}\label{eq:def-SQ}
        S^2=\sum_{k\ge1}b_k^2,
        \qquad
        Q^2=\sum_{k\ge1}(k-1)b_k^2.
\end{equation}
Then
\begin{equation}\label{eq:third-moment}
        |\tau(u^3)|\le 3S^2Q+Q^3.
\end{equation}
\end{proposition}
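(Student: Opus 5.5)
The plan is to reduce \eqref{eq:third-moment} to a purely combinatorial inequality for the nonnegative sequence $(b_k)$ via Lemma~\ref{lem:projection}, and then to prove that inequality by weighted Cauchy--Schwarz estimates.

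\emph{Step 1: reduction to shell norms.} Expand $\tau(u^3)=\sum_{i,j,k\ge1}\tau(u_iu_ju_k)$. Since $u=u^*$ and each $H_k$ is $*$-invariant ($|g^{-1}|=|g|$), we have $u_k^*=u_k$. Hence
\[
\tau(u_iu_ju_k)=\langle \PiH_k(u_iu_j),u_k\rangle_{L_2}.
\]
By Lemma~\ref{lem:projection} this vanishes unless $(i,j,k)\in\Aadm$, and otherwise it is bounded by $b_ib_jb_k$. So it suffices to show
\begin{equation*}
\Sigma:=\sum_{(i,j,k)\in\Aadm} b_ib_jb_k\ \le\ 3S^2Q+Q^3 .
\end{equation*}
As a sanity check, take $b$ supported on shells $1,2$. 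Then $\Sigma=3b_1^2b_2+b_2^3$ and $Q=b_2$, so the bound holds with room $3b_2^3$, and the check shows both terms on the right are really needed. The $Q^3$ term absorbs $(2,2,2)$, which has no counterpart on the cycle.

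\emph{Step 2: splitting by the smallest index.} Write $\Sigma$ as a sum over unordered triples $i\le j\le k$ with multiplicities, using that admissibility means $k\le i+j$ together with the parity condition. Then split according to the smallest index $i$.

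For $i=1$, parity forces $k=j+1$. These triples contribute $3b_1^2b_2+6b_1\sum_{j\ge2}b_jb_{j+1}$. I would estimate
\[
\sum_j b_jb_{j+1}\le \Bigl(\sum b_j^2\Bigr)^{1/2}\Bigl(\sum j\,b_{j+1}^2\Bigr)^{1/2},
\]
and the last factor is exactly $Q$. This gives a contribution of the form $b_1\,S\,Q$.

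For $i\ge2$, all three indices are at least $2$, so every $b$ carries a weight $(m-1)\ge1$. For fixed $i,j$ the admissible $k$ form an arithmetic progression of step $2$ with at most $i+1\le 2(i-1)+1$ terms. Cauchy--Schwarz in $k$ against the weight $(k-1)$, and then in $(i,j)$, should give a bound of the form $c_1 S^2Q+c_2Q^3$. The mechanism is that the count of admissible $k$ is compensated by the weight $(i-1)$ on the smaller index.

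\emph{Step 3: matching constants, the main obstacle.} Naive budgeting does not close. For instance, $6b_1SQ\le 3(b_1^2+S^2)Q$ already exhausts the $3S^2Q$ budget by itself. So the pieces must be combined more carefully, keeping track of which part of $S^2$ each bound uses: the $b_1$-part versus the $b_{\ge2}$-part, together with $S^2=b_1^2+\sum_{k\ge2}b_k^2$ and $Q^2\ge\sum_{k\ge2}b_k^2$.

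The first thing I would try is a single weighted Schur-type test for the trilinear form on $\Aadm$. The form is symmetric, and I would bound it by
\[
\sum_{\Aadm} b_ib_jb_k\le \sum_{\Aadm}\frac{b_i^2\,w_{jk}\,b_j b_k}{\ldots}
\]
with weights built from $\sqrt{k-1}$. The weights should be chosen so that the row sums reproduce $3S^2Q+Q^3$ exactly, with equality in the two-shell example above as a guide.

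If that fails, I would use a different identity. The Chebyshev polynomials of the second kind satisfy $U_iU_j=\sum_{k:(i,j,k)\in\Aadmn}U_k$. Hence $\Sigma=\tau(v^3)$ for $v=\sum_k b_kU_k(s)$ with $s$ semicircular, where the number operator plays the role of $L$. This turns the problem into a one-variable moment inequality for the semicircle law, which can be attacked by explicit computation.
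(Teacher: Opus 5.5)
\textbf{There is a genuine gap: after Step 1 the proposal does not prove the inequality, and the route it sketches cannot be completed.}

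Your Step 1 is exactly the paper's reduction to $\Sigma=\sum_{(i,j,k)\in\Aadm}b_ib_jb_k$. Nothing after it amounts to a proof:
\begin{itemize}
\item the Schur test is written with unspecified weights;
\item the $i\ge2$ estimate is only asserted to produce some $c_1S^2Q+c_2Q^3$;
\item the Chebyshev identity is correct, but it only restates the problem. It is precisely the one-dimensional case of Example~\ref{ex:free-gaussian}, not a reduction to a computation.
\end{itemize}

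\textbf{Why your sketched route fails.} The inequality $\Sigma\le 3S^2Q+Q^3$ is asymptotically sharp along a whole one-parameter family. Take $b_k=z^k$ for $1\le k\le N$, with $0<z<1$ and $N\to\infty$. Then $S^2\to\gamma$ and $Q\to\gamma$, where $\gamma=z^2/(1-z^2)$, and $\Sigma\to 3\gamma^2+\gamma^3=3S^2Q+Q^3$. So if you write $\Sigma=A+B$ and bound $A$ and $B$ separately, both bounds must be asymptotically exact on this family.

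Your bound on the $i=1$ piece is not exact there. On this family $\sum_{j\ge2}b_jb_{j+1}\to z^5/(1-z^2)$, while $SQ\to z^3/(1-z^2)^{3/2}$. Replacing $6b_1\sum_{j\ge2}b_jb_{j+1}$ by $6b_1SQ$ therefore loses a fixed positive amount for each fixed $z$. Meanwhile the total slack $3S^2Q+Q^3-\Sigma$ tends to $0$, so no treatment of the $i\ge2$ piece can compensate. Your two-shell ``guide'' also points the wrong way: it has slack $3b_2^3$, and the extremizers are geometric.

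The underlying problem is that splitting by the smallest index is the wrong decomposition. It separates the saturated triples $(1,j,j+1)$ from $(2,j,j+2),(3,j,j+3),\dots$, which have to be estimated together. It also lumps the latter with strictly triangular triples such as $(2,2,2)$, which belong to a different term.

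\textbf{The missing idea} is to parametrize $\Aadmn$ by cancellation lengths, $(i,j,k)=(s+t,r+t,r+s)$ with $r,s,t\in\Nzero$. Because $b_0=0$, at most one of $r,s,t$ can vanish. Let $K=(b_{r+s})_{r,s\ge1}$ be the Hankel matrix and $v=(b_r)_{r\ge1}$. Then
\[
\Sigma=\sum_{r,s,t\ge0}b_{s+t}\,b_{r+t}\,b_{r+s}=3\langle Kv,v\rangle+\Tr(K^3).
\]
Here $\|v\|_2^2=S^2$, and $\|K\|_{\mathrm{HS}}^2=\sum_{m\ge2}(m-1)b_m^2=Q^2$, because $m-1$ counts the ways to write $m=r+s$ with $r,s\ge1$. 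This is exactly why $Q$ carries the weight $k-1$. Hence
\[
\langle Kv,v\rangle\le\|K\|\,S^2\le S^2Q,
\qquad
\Tr(K^3)\le\|K\|_{\mathrm{HS}}^3=Q^3 .
\]
In Section~\ref{sect2} the paper bounds $T_1=\langle Kv,v\rangle=\sum_{r,s\ge1}b_rb_sb_{r+s}$ equivalently, by Cauchy--Schwarz with weight $m-1$ applied to $c_m=\sum_{r+s=m}b_rb_s$. On the geometric family, $K=vv^T$ is rank one with eigenvector $v$, so both estimates are equalities. This is consistent with the sharpness above.
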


\begin{proof}
By Lemma \ref{lem:projection},
\[
        |\tau(u_i u_j u_k)|
        =|\tau(\PiH_k(u_i u_j)u_k)|
        \le b_i b_j b_k
\]
when $(i,j,k)\in\Aadm$, and the term is zero otherwise. Hence
\begin{equation}\label{eq:admissible-sum}
        |\tau(u^3)|
        \le
        \sum_{(i,j,k)\in\Aadm} b_i b_j b_k.
\end{equation}
Every triple $(i,j,k)\in\Aadmn$ is uniquely of the form
\begin{equation}\label{eq:rst-param}
        (i,j,k)=(s+t,r+t,r+s),
        \qquad r,s,t\in\Nzero.
\end{equation}
Indeed,
\[
        r=\frac{j+k-i}{2},
        \qquad
        s=\frac{i+k-j}{2},
        \qquad
        t=\frac{i+j-k}{2}.
\]
Set $b_0=0$. Then \eqref{eq:admissible-sum} gives
\begin{equation}\label{eq:rst-sum}
        |\tau(u^3)|
        \le
        \sum_{r,s,t\ge0}b_{s+t}b_{r+t}b_{r+s}.
\end{equation}
Since $b_0=0$, the right-hand side of \eqref{eq:rst-sum} splits as
\begin{equation}\label{eq:T1T2}
        3T_1+T_2,
\end{equation}
where
\[
        T_1=\sum_{r,s\ge1}b_r b_s b_{r+s}
\]
and
\[
        T_2=\sum_{r,s,t\ge1}b_{s+t}b_{r+t}b_{r+s}.
\]

First estimate $T_1$. Let
\[
        c_m=\sum_{r+s=m}b_r b_s,
        \qquad m\ge2.
\]
Then
\[
        T_1=\sum_{m\ge2}b_m c_m.
\]
By Cauchy-Schwarz,
\[
        T_1
        \le
        \left(\sum_{m\ge2}(m-1)b_m^2\right)^{1/2}
        \left(\sum_{m\ge2}\frac{c_m^2}{m-1}\right)^{1/2}.
\]
For each $m\ge2$,
\[
        c_m^2
        \le
        (m-1)\sum_{r=1}^{m-1}b_r^2b_{m-r}^2.
\]
Therefore
\[
        \sum_{m\ge2}\frac{c_m^2}{m-1}
        \le
        \sum_{r,s\ge1}b_r^2b_s^2
        =S^4.
\]
Since $\sum_{m\ge2}(m-1)b_m^2=Q^2$, we get
\begin{equation}\label{eq:T1}
        T_1\le S^2Q.
\end{equation}

Next estimate $T_2$. Let
\[
        K=(b_{r+s})_{r,s\ge1}.
\]
This is a finite-rank real symmetric Hankel operator, since the sequence $(b_k)$ has finite support. Then
\[
        T_2=\Tr(K^3).
\]
Moreover,
\[
        \|K\|_{\mathrm{HS}}^2
        =\sum_{r,s\ge1}b_{r+s}^2
        =\sum_{k\ge2}(k-1)b_k^2
        =Q^2.
\]
Thus
\begin{equation}\label{eq:T2}
        T_2
        \le |\Tr(K^3)|
        \le \|K\|_{S_3}^3
        \le \|K\|_{\mathrm{HS}}^3
        =Q^3.
\end{equation}
Combining \eqref{eq:T1T2}, \eqref{eq:T1}, and \eqref{eq:T2} proves \eqref{eq:third-moment}.
\end{proof}

\begin{remark}[Sharpness and necessity of the $Q^3$ term]
\label{rem:third-moment-sharpness}
The coefficients in \eqref{eq:third-moment} are optimal uniformly over the
family of free groups, and the $Q^3$ term cannot be omitted on
$\mathbb{F}_n$ for any fixed $n\ge2$.  For $1\le n<\infty$ and $k\ge1$, put
\[
        d_k=\#\{g\in\mathbb{F}_n:|g|=k\}
        =2n(2n-1)^{k-1},
        \qquad
        h_k=d_k^{-1/2}\sum_{|g|=k}\lambda_g.
\]
For $0<z<1$, let
\[
        u_{z,N}=\sum_{k=1}^{N}z^kh_k,
        \qquad
        \gamma=\frac{z^2}{1-z^2}\in(0,\infty).
\]
Orthogonality and $Lh_k=kh_k$ give, as $N\to\infty$,
\[
        S(u_{z,N})^2\longrightarrow\gamma,
        \qquad
        Q(u_{z,N})^2\longrightarrow\gamma^2.
\]
A direct reduced-word count, using the parametrization
\eqref{eq:rst-param}, gives
\[
        \tau(h_rh_sh_{r+s})=\alpha_n,
        \qquad
        \tau(h_{s+t}h_{r+t}h_{r+s})=\beta_n,
\]
for $r,s,t\ge1$, where
\begin{align*}
        \alpha_n
        &:=\frac{2n(2n-1)^{r+s-1}}{(d_r d_s d_{r+s})^{1/2}}
        =\sqrt{\frac{2n-1}{2n}}, \\
        \beta_n
        &:=\frac{2n(2n-2)(2n-1)^{r+s+t-2}}{(d_{s+t}d_{r+t}d_{r+s})^{1/2}}
        =\frac{2n-2}{\sqrt{(2n-1)(2n)}}.
\end{align*}
Consequently,
\[
        \tau(u_{z,N}^3)
        \longrightarrow
        3\alpha_n\gamma^2+\beta_n\gamma^3.
\]
For every fixed $n\ge2$, one has $\beta_n>0$.  Letting
$\gamma\to\infty$ therefore shows that no estimate of the form
\[
        |\tau(u^3)|\le \kappa S^2Q
\]
can hold on $\mathbb{F}_n$ with a finite constant $\kappa$; hence the
$Q^3$ term is necessary.

More generally, if
\[
        |\tau(u^3)|\le \kappa S^2Q+\mu Q^3
\]
holds for every $n$ with constants independent of $n$, then the preceding
family gives
\[
        3\alpha_n+\beta_n\gamma\le \kappa+\mu\gamma.
\]
Letting $\gamma\downarrow0$ and then $n\to\infty$ yields
$\kappa\ge3$, while letting $\gamma\to\infty$ and then
$n\to\infty$ yields $\mu\ge1$.  Thus the coefficients $3$ and $1$ in
\eqref{eq:third-moment} are optimal uniformly over all finite $n$, and
consequently also on $\mathbb{F}_\infty$.

When $n=1$, one has $\beta_1=0$, reflecting the absence of interior
branching.  In this case $\mathbb{F}_1=\mathbb{Z}$, and the stronger
estimate without the $Q^3$ term holds; see \cite{XieZhang2026}.
\end{remark}

\section{The relaxed cubic majorant and proof of the main theorem}\label{sect3}

\begin{lemma}[Relaxed cubic majorant]\label{lem:relaxed-majorant}
For every $t\ge 0$ and every $\lambda>0$,
\begin{equation}\label{eq:relaxed-majorant}
        2t^2\log t
        \le
        \frac{2}{3\lambda}t^3+(2\log\lambda+1)t^2-2\lambda t+\frac{\lambda^2}{3},
\end{equation}
where $2t^2\log t$ is interpreted as $0$ at $t=0$.
\end{lemma}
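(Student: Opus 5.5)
We first reduce to the case $\lambda=1$ by scaling. Substituting $t=\lambda s$ with $s\ge0$, the left-hand side of \eqref{eq:relaxed-majorant} becomes
\[
2\lambda^2s^2\log s+2\lambda^2 s^2\log\lambda .
\]
The right-hand side becomes
\[
\frac{2}{3}\lambda^2 s^3+(2\log\lambda+1)\lambda^2s^2-2\lambda^2 s+\frac{\lambda^2}{3}.
\]
The terms $2\lambda^2s^2\log\lambda$ cancel on both sides. Dividing by $\lambda^2>0$, the inequality is therefore equivalent to
\[
        2s^2\log s\le \frac23 s^3+s^2-2s+\frac13,\qquad s\ge0.
\]
This is the cubic majorant of Frank and Ivanisvili with $\lambda=1$, and it explains the remark that the two forms are equivalent.

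To prove the case $\lambda=1$, set
\[
f(s)=\frac23 s^3+s^2-2s+\frac13-2s^2\log s .
\]
We must show $f\ge0$ on $[0,\infty)$.
\begin{itemize}
\item At $s=0$ we have $f(0)=\tfrac13>0$, using the convention $0\log 0=0$.
\item At $s=1$ we have $f(1)=\tfrac23+1-2+\tfrac13=0$, so $s=1$ is a zero of $f$.
\end{itemize}
Differentiating,
\[
f'(s)=2s^2+2s-2-4s\log s-2s=2s^2-2-4s\log s .
\]
Hence $f'(s)=2s\,g(s)$ with
\[
g(s)=s-\frac1s-2\log s .
\]
The factor $g$ satisfies $g(1)=0$ and
\[
g'(s)=1+\frac1{s^2}-\frac2s=\Bigl(1-\frac1s\Bigr)^2\ge0 .
\]
So $g$ is increasing, with $g<0$ on $(0,1)$ and $g>0$ on $(1,\infty)$. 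Consequently $f$ decreases on $(0,1]$ and increases on $[1,\infty)$. Its minimum on $(0,\infty)$ is therefore $f(1)=0$, and continuity at $0$ covers the endpoint. This proves the inequality.

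There is no real obstacle here. The only points needing care are keeping track of the $\log\lambda$ terms in the scaling step, so that they cancel exactly, and handling $t=0$ through the stated convention. An alternative that avoids scaling is to minimize the difference over $t$ directly. Writing
\[
F(t)=\text{RHS}-2t^2\log t,
\]
one gets $F'(t)=2\lambda\, g(t/\lambda)\,(t/\lambda)$ with the same $g$, and $F(\lambda)=0$. The scaling argument is shorter, so we would present that one.
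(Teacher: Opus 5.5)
Your proof is correct and follows the paper's argument: the substitution $t=\lambda s$ reduces the claim to the $\lambda=1$ cubic majorant $2s^2\log s\le \frac23 s^3+s^2-2s+\frac13$, and the $2\lambda^2 s^2\log\lambda$ terms cancel exactly as you say. The only difference is that the paper cites this base inequality from Frank and Ivanisvili, whereas you verify it yourself through $f'(s)=2s\,g(s)$ and $g'(s)=(1-1/s)^2\ge 0$, which is a valid self-contained check.
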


\begin{proof}
Recall the usual cubic majorant proved in \cite{FrankIvanisvili2026}
\begin{equation}\label{eq:usual-majorant}
        2s^2\log s
        \le
        \frac23(s-1)^2(s+2)+(s^2-1),
        \qquad s\ge0,
\end{equation}
or equivalently
\[
        2s^2\log s\le \frac23s^3+s^2-2s+\frac13.
\]
Apply this inequality with $s=t/\lambda$ and multiply by $\lambda^2$. This gives
\[
        2t^2\log\frac{t}{\lambda}
        \le
        \frac{2}{3\lambda}t^3+t^2-2\lambda t+\frac{\lambda^2}{3}.
\]
Adding $2t^2\log\lambda$ to both sides yields \eqref{eq:relaxed-majorant}.
\end{proof}

\begin{remark}
    Taking $\lambda=t>0$, \eqref{eq:relaxed-majorant} becomes an equality. So we actually proved the following variational formula:
    \begin{equation}
                2t^2\log t
        =\min_{\lambda >0}\left\{
        \frac{2}{3\lambda}t^3+(2\log\lambda+1)t^2-2\lambda t+\frac{\lambda^2}{3}\right\},\qquad t>0.
    \end{equation}
\end{remark}

\begin{proof}[Proof of Theorem \ref{thm:lsi}]
By homogeneity, it suffices to consider $x\ge0$ with $\tau(x^2)=1$. Write
\[
        x=a\1+u,
        \qquad
        a=\tau(x),
        \qquad
        \tau(u)=0.
\]
Since $x\ge0$ and $\tau(x^2)=1$, we have $0<a\le1$. Define $S,Q\ge0$ by
\[
        S^2=\tau(u^2)=1-a^2,
        \qquad
        Q^2=\tau(u(L-I)u),
\]
and set
\[
        E=\tau(xLx)=\tau(uLu)=S^2+Q^2,
        \qquad
        \lambda=a+Q.
\]
By Proposition \ref{thm:third-moment},
\[
        \tau(u^3)\le 3S^2Q+Q^3.
\]
Therefore
\begin{align*}
        \tau(x^3)
        &=a^3+3aS^2+\tau(u^3)  \\
        &\le a^3+3a(1-a^2)+3(1-a^2)Q+Q^3 \\
        &=3\lambda-3a\lambda^2+\lambda^3.
\end{align*}
Apply Lemma \ref{lem:relaxed-majorant} to $x$ by functional calculus. Since $\tau(x^2)=1$,
\[
        \Ent(x^2)=\tau(2x^2\log x).
\]
Thus
\begin{align*}
        \Ent(x^2)
        &\le \frac{2}{3\lambda}\tau(x^3)+(2\log\lambda+1)-2a\lambda+\frac{\lambda^2}{3} \\
        &\le \frac{2}{3\lambda}(3\lambda-3a\lambda^2+\lambda^3)
              +(2\log\lambda+1)-2a\lambda+\frac{\lambda^2}{3} \\
        &=\lambda^2-4a\lambda+3+2\log\lambda.
\end{align*}
On the other hand,
\[
        2E=2(S^2+Q^2)=2(1-a^2+Q^2).
\]
Using $\lambda=a+Q$, we get
\[
        2E-\bigl(\lambda^2-4a\lambda+3+2\log\lambda\bigr)
        =\lambda^2-1-2\log\lambda.
\]
Finally,
\[
        \lambda^2-1-2\log\lambda\ge0,
        \qquad \lambda>0,
\]
with equality only at $\lambda=1$. Hence
\[
        \Ent(x^2)\le2E=2\tau(xLx).
\]
This proves \eqref{eq:lsi} for positive elements of $\C[\F_n]$. The extension to the form domain follows by standard truncation and approximation, since $\C[\F_n]$ is a form core for $L^{1/2}$.
\end{proof}

\section{Extension and discussion}\label{sect4}

The proof for the free groups can be extended to more examples, which we briefly discuss here. The aim is not to give the most general possible statement, but to highlight the flexibility and mention several examples. 

\begin{theorem}[Cubic reduction]\label{thm:cubic-reduction}
Let $(\mathcal N,\tau)$ be a finite tracial von Neumann algebra, and let $P_t=e^{-tL}$ be a symmetric Markov semigroup on $\mathcal N$. Assume that $L\1=0$ and that the spectral gap is normalized to one, namely
\[
        \tau(uLu)\ge \tau(u^2)
\]
for every self-adjoint $u$ in the form domain of $L^{1/2}$ with $\tau(u)=0$. Let $\mathcal D$ be a unital $*$-subalgebra which is a form core for $L^{1/2}$.

Suppose that for every self-adjoint $u\in\mathcal D$ with $\tau(u)=0$, if
\[
        S^2=\tau(u^2),
        \qquad
        Q^2=\tau(u(L-I)u),
\]
then
\begin{equation}\label{eq:cubic-reduction-assumption}
        \tau(u^3)\le 3S^2Q+Q^3.
\end{equation}
Then for all positive $x\in \mathcal D$, the following log-Sobolev inequality holds
\begin{equation}\label{eq:cubic-reduction-lsi}
        \Ent(x^2)\le 2\tau(xLx).
\end{equation}
\end{theorem}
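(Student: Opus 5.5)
The plan is to repeat the proof of Theorem \ref{thm:lsi} almost word for word, checking at each step which property of $\M$ was used and that it holds in the abstract setting. By homogeneity of both sides of \eqref{eq:cubic-reduction-lsi} under $x\mapsto cx$, we may assume $\tau(x^2)=1$; the case $x=0$ is trivial. Write $x=a\1+u$ with $a=\tau(x)$ and $\tau(u)=0$. Then $u$ is self-adjoint and lies in $\mathcal D$, because $\mathcal D$ is a unital $*$-subalgebra. Positivity of $x$ and $\tau$ give $a\ge0$. Moreover $a>0$, since $a=\tau(x)=0$ would force $x=0$ by faithfulness of $\tau$, and $a\le\tau(x^2)^{1/2}=1$ by Cauchy--Schwarz. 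Set $S^2=\tau(u^2)=1-a^2$.

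Next we define $Q$. The spectral gap hypothesis gives $\tau(u(L-I)u)\ge0$, so $Q=\tau(u(L-I)u)^{1/2}\ge0$ is well defined; this is exactly the role of the normalization assumption. From $L\1=0$ and the symmetry of $L$ we get
\[
\tau(xLx)=\tau(uLu)=S^2+Q^2.
\]
Here the cross terms vanish because $\tau(\1 Lu)=\tau((L\1)u)=0$.

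Expanding the cube gives
\[
\tau(x^3)=a^3+3aS^2+\tau(u^3),
\]
where the cross terms use $\tau(u)=0$ and traciality. Assumption \eqref{eq:cubic-reduction-assumption} then gives
\[
\tau(x^3)\le 3\lambda-3a\lambda^2+\lambda^3 \qquad\text{with } \lambda=a+Q>0.
\]
Only the upper bound on $\tau(u^3)$ is needed, which matches the one-sided form of the hypothesis.

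We then apply Lemma \ref{lem:relaxed-majorant} to the positive operator $x$ via functional calculus and take the trace. Using $\tau(x^2)=1$ and $\tau(x)=a$, this gives
\[
\Ent(x^2)=\tau(2x^2\log x)\le \lambda^2-4a\lambda+3+2\log\lambda.
\]
In this step the coefficient $\frac{2}{3\lambda}$ of $\tau(x^3)$ is positive, so the upper bound on $\tau(x^3)$ may be substituted. Finally,
\[
2(1-a^2+Q^2)-(\lambda^2-4a\lambda+3+2\log\lambda)=\lambda^2-1-2\log\lambda\ge0,
\]
which gives $\Ent(x^2)\le 2\tau(xLx)$.

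No step is genuinely hard. The points to check are these:
\begin{itemize}
\item $x\log x$ and the functional calculus make sense in $\mathcal N$, which holds since $x$ is bounded and positive.
\item The entropy formula $\Ent(x^2)=\tau(x^2\log x^2)-\tau(x^2)\log\tau(x^2)$ reduces to $\tau(2x^2\log x)$ when $\tau(x^2)=1$.
\item The algebraic identity in the last display is verified using $S^2=1-a^2$.
\end{itemize}
The form core hypothesis on $\mathcal D$ is not needed for the statement as given, since it concerns only $x\in\mathcal D$. It would matter only for extending the inequality by closure, which we can note in a remark.
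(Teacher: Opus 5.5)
Your proposal is correct and follows exactly the paper's approach: the paper proves this theorem by saying the proof of Theorem~\ref{thm:lsi} carries over verbatim, and you have carried it over. You also correctly identify where each abstract hypothesis enters: the spectral gap makes $Q\ge0$ well defined, $L\1=0$ gives $\tau(xLx)=S^2+Q^2$, and only the one-sided bound on $\tau(u^3)$ is used.
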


\begin{proof}
 The proof is the same as in the proof of Theorem \ref{thm:lsi}. 
  \end{proof}

The following proposition gives a sufficient condition for \eqref{eq:cubic-reduction-assumption}.

\begin{proposition}[Triangular shell criterion]\label{prop:triangular-shell}
Assume that
\[
        L_2(\mathcal N,\tau)=\bigoplus_{k\ge0}H_k,
        \qquad
        H_0=\C\1,
        \qquad
        L|_{H_k}=k\textnormal{Id},
\]
and that each $H_k$ is invariant under $*$. Let $\Pi_k$ be the orthogonal projection onto $H_k$. Suppose that
\[
        \Pi_k(H_iH_j)=0
\]
unless
\[
        |i-j|\le k\le i+j,
        \qquad
        i+j+k\in2\Nzero,
\]
and that, whenever this condition holds,
\begin{equation}\label{eq:triangular-shell-projection}
        \|\Pi_k(ab)\|_2\le \|a\|_2\|b\|_2,
        \qquad
        a\in H_i,
        \quad
        b\in H_j.
\end{equation}
Then \eqref{eq:cubic-reduction-assumption} holds. Hence Theorem \ref{thm:cubic-reduction} applies.
\end{proposition}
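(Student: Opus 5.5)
The plan is to repeat the proof of Proposition \ref{thm:third-moment} almost verbatim, isolating the only two places where the free group structure was used. Fix a self-adjoint $u\in\mathcal D$ with $\tau(u)=0$. I would first note that, since $\mathcal D$ is in the form domain, $u$ has a shell decomposition $u=\sum_{k\ge1}u_k$ with $u_k=\Pi_k u$. There is no $H_0$ component because $\tau(u)=\langle u,\1\rangle=0$. Each $u_k$ is self-adjoint, since $H_k$ is $*$-invariant, so $\Pi_k$ commutes with the adjoint and $u_k^*=\Pi_k u^*=u_k$. Put $b_k=\|u_k\|_2$. Because $L|_{H_k}=k\,\mathrm{Id}$, we get $S^2=\sum_k b_k^2$ and $Q^2=\sum_k(k-1)b_k^2$, exactly as in \eqref{eq:def-SQ}.

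Next I would expand $\tau(u^3)=\sum_{i,j,k}\tau(u_iu_ju_k)$. To justify the expansion, I would take $u$ with finitely many nonzero shells, or else argue that the sums converge absolutely once they are dominated by the bound below. Then I would write $\tau(u_iu_ju_k)=\langle u_iu_j,u_k^*\rangle=\langle\Pi_k(u_iu_j),u_k\rangle$, using self-adjointness of $u_k$ and $u_k\in H_k$. The hypotheses of the proposition, namely the vanishing of $\Pi_k(H_iH_j)$ outside $\Aadmn$ and the bound \eqref{eq:triangular-shell-projection}, are exactly the two conclusions of Lemma \ref{lem:projection}. So by Cauchy--Schwarz, $|\tau(u_iu_ju_k)|\le b_ib_jb_k$ for $(i,j,k)\in\Aadm$, and the term is zero otherwise. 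This is \eqref{eq:admissible-sum} in the present setting.

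From here the argument is purely about the nonnegative sequence $(b_k)$, so I would invoke the remainder of the proof of Proposition \ref{thm:third-moment} word for word. That means the $(r,s,t)$ parametrization \eqref{eq:rst-param}, the split into $3T_1+T_2$, the Cauchy--Schwarz bound $T_1\le S^2Q$, and the Hankel bound $T_2\le\|K\|_{\mathrm{HS}}^3=Q^3$. Together these give $\tau(u^3)\le|\tau(u^3)|\le 3S^2Q+Q^3$, which is \eqref{eq:cubic-reduction-assumption}. The spectral gap hypothesis of Theorem \ref{thm:cubic-reduction} also holds automatically, since $\tau(uLu)=\sum_k k b_k^2\ge S^2$. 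Hence that theorem applies.

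The main obstacle I expect is finiteness. In the free group case $u\in\C[\F_n]$ has finitely many nonzero shells, so the Hankel matrix $K$ has finite rank and the triple sum is finite. For a general $\mathcal D$, the element $u$ may have infinitely many nonzero shells. I would handle this by truncation. Let $u^{(N)}=\sum_{k\le N}u_k$. The bound $|\sum_{i,j,k\le N}\tau(u_iu_ju_k)|\le 3S^2Q+Q^3$ holds uniformly in $N$, because $K$ is Hilbert--Schmidt with norm at most $Q$, and $\|K\|_{S_3}\le\|K\|_{S_2}$ still holds. I would then pass to the limit using $u\in L_3$ (indeed $u\in\mathcal N$) and $u^{(N)}\to u$ in $L_2$. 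Absolute convergence of $\sum b_ib_jb_k$ over $\Aadm$ then lets me identify $\tau(u^3)$ with the full triple sum.
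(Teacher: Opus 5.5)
Your proposal is correct and is essentially the paper's argument. The paper likewise reduces, via the shell hypotheses, to $\sum_{r,s,t\ge0}b_{s+t}b_{r+t}b_{r+s}=3T_1+T_2$; the only cosmetic differences are that it writes this sum as $\Tr(\mathsf H^3)$ for a bordered Hankel matrix built from $v=(b_r)_{r\ge1}$ and $K$, and bounds $T_1=\langle Kv,v\rangle\le\|K\|\,\|v\|_2^2\le QS^2$ instead of using your Cauchy--Schwarz on $c_m$.

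Your truncation paragraph goes beyond the paper, which never addresses convergence (its examples $\C[\F_n]$ and Wick polynomials have finitely many shells). Note, though, that $u^{(N)}\to u$ in $L_2$ alone does not give $\tau((u^{(N)})^3)\to\tau(u^3)$. What actually needs justifying there is the termwise expansion of $\tau(u^3)$, after which absolute convergence of $\sum b_ib_jb_k$ does the rest.
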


\begin{proof} The proof is the same as the free group case. We repeat it here with a slightly different presentation.
Let $u=u^*$, $\tau(u)=0$, and write
\[
        u=\sum_{k\ge1}u_k,
        \qquad
        u_k\in H_k.
\]
Put $b_k=\|u_k\|_2$ and $b_0=0$. By \eqref{eq:triangular-shell-projection},
\[
        |\tau(u_i u_j u_k)|
        =|\langle \Pi_k(u_i u_j),u_k^*\rangle|
        \le b_i b_j b_k
\]
on the triangle-parity set, and the term is zero outside it. Hence
\[
        |\tau(u^3)|
        \le
        \sum_{(i,j,k)\in\Aadm}b_i b_j b_k.
\]
Parameterize the triangle-parity triples by
\[
        (i,j,k)=(s+t,r+t,r+s),
        \qquad
        r,s,t\in\Nzero.
\]
Then
\begin{equation}\label{eq:full-hankel-sum}
        |\tau(u^3)|
        \le
        \sum_{r,s,t\ge0}b_{s+t}b_{r+t}b_{r+s}.
\end{equation}
We now write the last triple sum as the cubic moment of one block Hankel matrix.  Let
\[
        v=(b_r)_{r\ge1},
        \qquad
        K=(b_{r+s})_{r,s\ge1},
\]
and set
\[
        \mathsf H=
        \begin{pmatrix}
        0 & v^T\\
        v & K
        \end{pmatrix}
\]
on $\C\oplus\ell_2(\N)$.  Then
\[
        \Tr(\mathsf H^3)
        =
        \sum_{r,s,t\ge0}b_{s+t}b_{r+t}b_{r+s}
        =3\langle Kv,v\rangle+\Tr(K^3).
\]
Moreover,
\[
        \|v\|_2^2=S^2,
        \qquad
        \|K\|_{\mathrm{HS}}^2
        =\sum_{r,s\ge1}b_{r+s}^2
        =\sum_{m\ge2}(m-1)b_m^2
        =Q^2,
\]
where
\[
        S^2=\sum_{k\ge1}b_k^2,
        \qquad
        Q^2=\sum_{k\ge1}(k-1)b_k^2.
\]
Therefore
\[
        \langle Kv,v\rangle
        \le \|K\|\,\|v\|_2^2
        \le \|K\|_{\mathrm{HS}}S^2
        =S^2Q,
\]
and
\[
        \Tr(K^3)
        \le |\Tr(K^3)|
        \le \|K\|_{\mathrm{HS}}^3
        =Q^3.
\]
Combining these estimates with \eqref{eq:full-hankel-sum} gives
\[
        |\tau(u^3)|\le 3S^2Q+Q^3.\qedhere
\]

\end{proof}

\begin{example}[Tree-like free products]\label{ex:tree-like-groups}
Consider the free product
\[
        G=\left(*_{\alpha\in A}\mathbb Z\right)*\left(*_{\beta\in B}\mathbb Z_2\right)
\]
of some copies of $\mathbb Z$ and some copies of $\mathbb Z_2$. Let $|g|$ be the associated reduced word length, and let
\[
        P_t(\lambda_g)=e^{-t|g|}\lambda_g.
\]
This includes free products of groups whose Cayley graphs, with the chosen generators, are trees. Some sharp hypercontractivity estimates for $G$ are already known by  Junge, Palazuelos, Parcet, Perrin and Ricard \cite{JungePalazuelosParcetPerrinRicard2015}.  We show that $G$ satisfies Proposition \ref{prop:triangular-shell}.

Indeed, the Cayley graph is a tree.  Hence a product of two reduced words of lengths $i$ and $j$ can change length only by cancelling a terminal segment of the first word against the inverse initial segment of the second.  Therefore, a product can land in $H_k$ only if
\[
        |i-j|\le k\le i+j,
        \qquad
        i+j+k\in2\Nzero.
\]
Moreover, the same prefix-suffix argument as in Lemma \ref{lem:projection} gives
\[
        \|\Pi_k(ab)\|_2\le \|a\|_2\|b\|_2,
        \qquad
        a\in H_i,
        \quad
        b\in H_j.
\]
Thus Proposition \ref{prop:triangular-shell} proves the third-moment estimate \eqref{eq:cubic-reduction-assumption}.  Theorem \ref{thm:cubic-reduction} then gives
\[
        \Ent(x^2)\le 2\tau(xLx).
\]
\end{example}

\begin{example}[Free Gaussian]\label{ex:free-gaussian}
Let $\mathcal H_{\mathbb R}$ be a real Hilbert space and let
$\Gamma(\mathcal H_{\mathbb R})$ be the free Gaussian von Neumann algebra
on the full Fock space
\[
        \mathcal F(\mathcal H_{\mathbb C})
        =\mathbb C\Omega\oplus\bigoplus_{k\ge1}\mathcal H_{\mathbb C}^{\otimes k}.
\]
Let $W(\xi)$ denote the free Wick word associated with
$\xi\in\mathcal H_{\mathbb C}^{\otimes k}$.  The $k$-th homogeneous space is
\[
        H_k=\{W(\xi):\xi\in\mathcal H_{\mathbb C}^{\otimes k}\},
        \qquad k\ge0,
\]
with $H_0=\mathbb C\1$, and the Ornstein-Uhlenbeck generator is
\[
        L W(\xi)=k W(\xi),
        \qquad \xi\in\mathcal H_{\mathbb C}^{\otimes k}.
\]
We verify Proposition \ref{prop:triangular-shell}, and then this recovers
Biane's free hypercontractivity theorem \cite{Biane1997}.

The reason is that free Wick multiplication has the same triangular
cancellation structure as reduced-word multiplication in a free group.  If
$\xi\in\mathcal H_{\mathbb C}^{\otimes i}$ and
$\eta\in\mathcal H_{\mathbb C}^{\otimes j}$, then the free Wick product
formula gives
\begin{equation}\label{eq:free-wick-product}
        W(\xi)W(\eta)
        =\sum_{r=0}^{\min(i,j)} W(\xi\frown_r\eta),
\end{equation}
where $\xi\frown_r\eta\in\mathcal H_{\mathbb C}^{\otimes(i+j-2r)}$
is the $r$-fold contraction (\cite[Definition 1.21 and Proposition 1.25, Eq. (1.7)]{KNPS12} and \cite[Lemma 3.2]{HoudayerRicard2011}).
Also, we have \cite[Eq. (1.5)]{KNPS12}
\[
        \|W(\xi)\|_2=\|\xi\|,\qquad\xi\in H_{\mathbb C}^{\otimes k},
\]
 and the contraction is
contractive:
\[
        \|\xi\frown_r\eta\|\le \|\xi\|\,\|\eta\|.
\]
Consequently, for $a\in H_i$ and $b\in H_j$, the projection $\Pi_k(ab)$ is zero unless
\[
        k=i+j-2r
\]
for some $0\le r\le \min(i,j)$.  Equivalently,
\[
        |i-j|\le k\le i+j,
        \qquad
        i+j+k\in2\mathbb N_0.
\]
When this condition holds, the integer $r=(i+j-k)/2$ is unique, and \eqref{eq:free-wick-product} gives
\[
        \Pi_k(W(\xi)W(\eta))=W(\xi\frown_r\eta).
\]
Therefore
\[
        \|\Pi_k(W(\xi)W(\eta))\|_2
        =\|\xi\frown_r\eta\|
        \le \|\xi\|\,\|\eta\|
        =\|W(\xi)\|_2\,\|W(\eta)\|_2.
\]
Thus \eqref{eq:triangular-shell-projection} is satisfied and Proposition \ref{prop:triangular-shell} gives
\[
        |\tau(u^3)|\le 3S^2Q+Q^3
\]
for every self-adjoint mean-zero Wick polynomial $u$.  Theorem \ref{thm:cubic-reduction} then yields
\[
        \Ent(x^2)\le 2\tau(xLx).
\]
\end{example}

We close the paper with the following remark.
    The third moment estimate \eqref{eq:cubic-reduction-assumption} can fail when a special structure appears, and let us explain it for discrete group von Neumann algebras as it is already addressed in \cite{JungePalazuelosParcetPerrin2017}. Let $G$ be a discrete group equipped with a symmetric conditionally negative length function $|\cdot|$, and consider the associated Markov semigroup $e^{-tL}$ satisfying $L\lambda_g=|g|\lambda_g$.  Let us normalize the spectral gap $\inf_{g\neq e}|g|=1$ so that the log-Sobolev constant is expected to be 2. Suppose that a 3-loop exists, that is, there exist $g_1,g_2,g_3$ such that $g_1 g_2 g_3=e$ and $|g_1|=|g_2|=|g_3|=1$. Then the right-hand side of \eqref{eq:cubic-reduction-assumption} vanishes for $u=\sum_{1\le j\le 3}a_j(\lambda_{g_j}+\lambda_{g_j}^\ast)$. However, the left-hand side of \eqref{eq:cubic-reduction-assumption} can be strictly positive for this $u$. This is not surprising. As already pointed out in \cite{JungePalazuelosParcetPerrin2017}, the presence of 3-loops forces the optimal hypercontractivity time to be strictly larger than the usual one. In particular, the log-Sobolev constant should be strictly larger than 2, so the current machinery of third-moment estimate does not work.

\end{document}